\documentclass[a4paper,12pt]{article}
\usepackage{graphics}
\usepackage{epsf}

\usepackage{amsfonts}
\usepackage{amssymb}
\usepackage{epsfig}
\usepackage[latin1]{inputenc}
\usepackage{colortbl}

\usepackage{amsthm,amsmath,amssymb}

\newtheorem{theorem}{Theorem}

\newtheorem{remark}[theorem]{Remark}

\begin{document}
\title{Splitting methods for nonlinear Schr\"odinger equation without order reduction}
\author{{\sc C. Arranz-Sim\'on \thanks{Email: carlos.arranz@uva.es} {\sc and} B. Cano \thanks{Corresponding author. Email: bcano@uva.es}}  \\ \small
IMUVA, Departamento de Matem\'atica Aplicada,\\ \small Facultad de Ciencias, Universidad de
Valladolid,\\ \small Paseo de Bel\'en 7, 47011 Valladolid,\\ \small Spain
}
\date{}

%
%

\maketitle

\begin{abstract}
A technique is provided in this paper to integrate nonlinear Schr\"odinger equation with time-dependent Dirichlet boundary conditions with high-order Yoshida splittings which are based on Strang method. For that, a modification of Strang method is required in which the linear and stiff part of the equation is integrated with a rational-like version of midpoint rule for which the required boundary values can be calculated without resorting to any differentiation of data. Although Yoshida splitting (with real coefficients) cannot be applied to parabolic problems to obtain order higher than two because of stability, the modified Strang method is also applicable to such type of problems and local order $3$ and global order $2$ are also obtained without differentiation of data.
\end{abstract}



\section{Introduction}

It is well-known the phenomenom of order reduction which splitting methods show, even when integrating linear problems with homogeneous boundary conditions and solving each part in an exact way (see \cite{FOS} for the analysis of Lie-Trotter and Strang method.) Because of that, several techniques have been developed in the literature to avoid it. Up to our knowledge, the best result in this direction is the technique described in \cite{APo} for some semilinear wave problems with time-dependent boundary values, for which arbitrary high order can be observed. On the other hand, for multidimensional problems with commuting operators there are also techniques for splitting integrators which manage to improve accuracy reasonably \cite{ACJ,ACR0, GH}. However, there are other problems which are not of that type, like reaction-diffusion problems or nonlinear Schr\"odinger equation, for which local order $3$ and global order $2$ is the maximum which has been achieved in the literature when the splitting separates the linear and nonlinear part \cite{ACR, ACR1, CR, EO1, EO2}. What is more, in order to achieve local order $3$, numerical differentiation is required, which is well-known to be unstable for very small stepsizes. However, local and global order $2$ can be obtained with a summation-by-parts argument without any numerical differentiation of data.

On the other hand, a technique has been recently described in the literature to avoid order reduction of initial boundary value problems with rational-like methods \cite{ACP}. The main advantage of this strategy is that it does not require either numerical nor analytical differentiation of data, but of course boundary values are required and the higher the accuracy that wants to be achieved, the higher the number
of necessary boundary evaluations. The key of order reduction with splitting methods is the determination of the boundary values of the intermediate problems and the main idea of this paper is that the rational-like method which is based on the midpoint rule just requires the boundary values at the beginning and end of each step in order that no order reduction is observed. Those boundary values are easily calculable in terms of the data of the original problem and, in such a way, we get local order $3$ with the so-called modified Strang method without using any differentiation of data.

We focus on NLS equation in this paper  because, in such a case, the nonlinear part can be exactly solved (which simplifies the description of the technique and the analysis) but, more importantly, because splitting methods do not show a stability order barrier with this equation. (We notice that in parabolic problems that order barrier is two \cite{BC, Sh} when considering splitting methods with real coefficients.) In such a way, we manage to obtain arbitrary high order with Yoshida splittings \cite{Y} without any differentation of data.

The paper is structured as follows. Section 2 gives some preliminaries on Strang method applied to NLS equation. Section 3 describes the rational-like midpoint rule to solve the stiff part of Strang method without order reduction and conserving symmetry. In Section 4, the whole modified Strang method is analysed. Section 5 justifies the arbitrary order which is thus achieved with Yoshida splitting using the modified Strang method as a basis. Section 6 shows some numerical experiments which corroborate the results and, finally, in an appendix, the analysis for the modified Strang method in an abstract framework which includes reaction-diffusion problems is given.

\section{Preliminaries}

We will use the following notation for the solution of nonlinear Schr\"odinger equation
\begin{eqnarray}
\dot{u}(t)&=& i \big(A u(t)+f(|u(t)|^2) u\big), \quad 0 \le t \le T,\nonumber \\
u(0)&=&u_0,  \nonumber \\
\partial u(t)&=& g(t), \label{nlse}
\end{eqnarray}
where $A: H^2(\Omega)\to L^2(\Omega)$ denotes the operator which applies the second derivative in space in one dimension and the Laplacian in general for a bounded domain $\Omega$ with regular enough boundary. On the other hand, $f$ is a smooth enough real function, $u_0\in H^2(\Omega)$ and $\partial: H^2(\Omega) \to H^ {\frac{3}{2}}(\partial \Omega)$ is the Dirichlet operator.

In the following, we will use the operators $A_0=A|_{\mbox{ker}(\partial)}$ and $K: H^{\frac{3}{2}}(\partial \Omega) \to H^2(\Omega)$, so that $K g$ is the solution of the elliptic problem
\begin{eqnarray}
A u &=&0, \nonumber \\
\partial u&=&g, \nonumber
\end{eqnarray}
which is well-known to satisfy \cite{ACP}
\begin{eqnarray}
\|Kg \|_{L^2(\Omega)}\le C \|g\|_{H^{\frac{3}{2}}(\partial \Omega)},
\nonumber
\end{eqnarray}
for some constant $C$.
Moreover, if $g \in H^{\frac{3}{2}}(\partial \Omega)\cap L^{\infty}(\Omega)$, for some other constant it happens that \cite{S}
\begin{eqnarray}
\|Kg \|_{L^\infty(\Omega)}\le C \|g\|_{L^\infty(\partial \Omega)}.
\label{acotK2}
\end{eqnarray}
It is well-known that advancing a stepsize $\tau$ with Strang method to solve a general differential system of the form
$$\dot{u}=f_1(u)+f_2(u),$$
consists of the following
$$u_{n+1}=\phi_{\frac{\tau}{2}}^{f_2}\circ\phi_{\tau}^{f_1}\circ\phi_{\frac{\tau}{2}}^{f_2}(u^n),$$
where $\phi_\tau^{f_1}$ and $\phi_\tau^{f_2}$ are the exact flows after time $\tau$ of the respective systems
$$
\dot{u}=f_1(u), \quad \dot{u}=f_2(u).$$
In the case of
(\ref{nlse}), one of the terms in the splitting would be
\begin{eqnarray}
\dot{u}&=& i f(|u|^2) u, \label{nolinear}
\end{eqnarray}
which can be exactly solved because $|u|^2$ remains constant under this evolution \cite{WH}
$$
\frac{d}{dt} |u|^2= \dot{u} \bar{u}+ u \dot{\bar{u}}=i f(|u|^2) u  \bar{u}-i u f(|u|^2) \bar{u}=0,$$
so this can be solved as a linear system. More precisely, the solution of (\ref{nolinear}) from an initial condition $u_0$ would be
$$
u(t)=e^{i t f(|u_0|^2)}u_0.$$

On the other hand, for the solution of the other term of the splitting
\begin{eqnarray}
\dot{u}&=& i A u, \label{linear}
\end{eqnarray}
to be completely determined, apart from an initial condition, a boundary condition is also necessary. In that sense, in \cite{CR}, some boundaries are suggested which are of course related to the boundary $g$ in (\ref{nlse}) and for which no order reduction is observed. More precisely, second global order is observed when applying Strang method with those boundaries and solving exactly its various stages. If an analytic expression for $g$ is known, those boundaries can be calculated without resorting to numerical differentiation, although just local order $2$ is achieved and a summation-by-parts argument must be applied to justify the same global order. In any case, $\dot{g}$ is necessary and, if no analytic expression of $g$ is available, numerical differentiation would also be required. Finally, notice that, as justified in Remark 1 of \cite{CR}, if local order $3$ is searched for, numerical differentiation in space from the numerical solution and not directly from the data in (\ref{nlse}) would also be required.

\section{Midpoint rule to solve the stiff part of Strang method without order reduction and conserving symmetry}

In this section, without knowing the exact boundary in (\ref{linear}) which would lead to local order $3$ or more, we suggest a way to integrate it (again with local order $3$) just by knowing the exact values of the boundary at the beginning and end of integration of such a stage.
We notice that, at each step, such part of Strang method corresponds to solve
\begin{eqnarray}
\dot{w}^n(s)&=&i A w^n(s), \nonumber \\
\partial w^n(s)&=& g^n(s), \nonumber \\
w^n(0)&=& e^{i \frac{\tau}{2}f(|u_n|^2)} u_n, \label{midstrang}
\end{eqnarray}
where $g^n$ should be a function which satisfies
\begin{eqnarray}
g^n(0)&=& e^{i \frac{\tau}{2}f(|g(t_n)|^2)} g(t_n), \nonumber \\
g^n(\tau)&=& e^{-i \frac{\tau}{2}f(|g(t_{n+1})|^2)} g(t_{n+1}), \label{known}
\end{eqnarray}
so that the boundary fits perfectly with that to which we arrive advancing $\tau/2$ from $u_n$ with the nonlinear part of Schr\"odinger equation and to that to which we want to arrive when advancing $\tau/2$ from $w^n(\tau)$ with the same nonlinear part.

According to \cite{ACP}, in order to integrate (\ref{midstrang}) without order reduction with a rational-like method which is based on the implicit midpoint rule, for which the stability function is
\begin{eqnarray}
r(z)=-1+\frac{2}{1-\frac{z}{2}},
\label{r}
\end{eqnarray}
the following must be implemented
\begin{eqnarray}
\tilde{w}_{n+1}= r(i \tau A_0) \bar{w}_n- F_n(\tau)g^n(0+\cdot),  \nonumber
\end{eqnarray}
where $\tilde{w}_n=w^n(0)-K g^n(0)$ and
$$F_n(\tau) g^n =(I-i \frac{\tau}{2}A_0)^{-1} K \boldsymbol{\eta}^T  g^n(0+\tau \mathbf{d}),$$
so that
$\boldsymbol{\eta}^T g^n(0+\tau \mathbf{d})$ approximates $L\tau B (I- \tau B/2)^{-1} g^n(0+\cdot)$ with order $3$, where $B$ is the operator consisting of the first time derivative applied over functions in $C^1_{ub}([0,\infty), H^{\frac{3}{2}}(\partial \Omega))$ and $L: C_{ub}([0,\infty),H^{\frac{3}{2}}(\partial \Omega))\to H^{\frac{3}{2}}(\partial \Omega))$ denotes the delta operator
$$L h= h(0), \quad h \in C_{ub}([0,\infty),H^{\frac{3}{2}}(\partial \Omega)).$$
Then, the approximation to $w^n(\tau)$ would be given by $\tilde{w}_{n+1}+ K g^n(\tau)$.

According to Lemma 4.3 in \cite{AP}, $\boldsymbol{\eta}=(\eta_1,\eta_2,\eta_3)^T$ and $\mathbf{d}=(d_1,d_2,d_3)^T$ must be chosen so that
$$
d_1^j\eta_1+d_2^j \eta_2+d_3^j \eta_3=j! F_j, \quad 0 \le j \le 2,
$$
where $F_j$ are the first Taylor coefficients of $F(z)=z/(1-z/2)$ around $z=0$. We notice that, by taking $d_1=0, d_2=1/2, d_3=1$, we do not only manage to consider the known values (\ref{known}) but also that the coefficient $\eta_2=0$ so that the intermediate value $g^n(\tau/2)$ is in fact not necessary. Notice that the system to solve is
\begin{eqnarray}
\begin{array}{ccccccc}
\eta_1&+&\eta_2&+&\eta_3&=&0,  \\
&&  \eta_2/2&+&\eta_3&=&1,  \\
&& \eta_2/4 &+&\eta_3&=&1,
\end{array}
\end{eqnarray}
for which $\eta_1=-1$, $\eta_2=0$, $\eta_3=1$.

Plugging this together,
\begin{eqnarray}
\tilde{w}_{n+1}&=&[-I+2(I-\frac{i \tau}{2}A_0)^{-1}]\tilde{w}_n- (I-\frac{i \tau}{2}A_0)^{-1}K [g^n(\tau)-g^n(0)],
\nonumber
\end{eqnarray}
from what $w_{n+1}=\tilde{w}_{n+1}+K g^n(\tau) \approx w^n(\tau)$ would be
\begin{eqnarray}
w_{n+1}&=&-w^n(0)+K [g^n(0)+g^n(\tau)]
\nonumber \\
&&+(I-\frac{i \tau}{2} A_0)^{-1}\bigg[2 w^n(0)- K[g^n(0)+g^n(\tau)]\bigg].
\label{wn1}
\end{eqnarray}
According to \cite{ACP}, the error commited in this way satisfies the following when $w^n \in C^3([0,\tau],L^2(\Omega))$ and $g^n \in C^4([0,\tau],L^2(\partial \Omega))$
\begin{eqnarray}
\hspace{0.5cm}\|w_{n+1}-w^n(\tau)\|_{L^2(\Omega)}\le C \tau^3 (\max_{0\le j \le 3} \|{w^n}^{(j)}(0)\|_{L^2(\Omega)}+\max_{0\le j \le 4} \|{g^n}^{(j)}(0)\|_{L^2(\partial \Omega)}), \label{erlocmp}
\end{eqnarray}
for some constant $C$.

\subsection{Symmetry}
In this subsection, we will prove that advancing $-\tau$ from $w_{n+1}$ using the same formula in (\ref{wn1}), we would arrive at $w^n(0)$.
For that, it suffices to take into account that (\ref{wn1}) can be written as
\begin{eqnarray}
w_{n+1}=[-I +2(I-\frac{i \tau}{2}A_0)^{-1}]w^n(0)+[I-(I-\frac{i \tau}{2}A_0)^{-1}]K [g^n(0)+g^n(\tau)],
\label{sim1}
\end{eqnarray}
and that changing $\tau$ by $-\tau$, $w_{n+1}$ by $w^n(0)$ and $g^n(0)$ by $g^n(\tau)$, it follows that
\begin{eqnarray}
w^n(0)=[-I +2(I+\frac{i \tau}{2}A_0)^{-1}]w_{n+1}+[I-(I+\frac{i \tau}{2}A_0)^{-1}]K [g^n(0)+g^n(\tau)].
\label{sim2}
\end{eqnarray}
Then, by using the algebraic expressions
$$
(-1+\frac{2}{1+z})^{-1}=-1+\frac{2}{1-z}, \quad (-1+\frac{2}{1+z})^{-1}(1-\frac{1}{1+z})=-1+\frac{1}{1-z},$$
(\ref{sim1}) can be seen equivalent to (\ref{sim2}), and the symmetry follows.


\section{Modified Strang method}

In the following, we will denote the modified Strang method to that which consists of the calculation of $w_{n+1}$ through (\ref{wn1}) starting from $w^n(0)$ in (\ref{midstrang}) and then completing the step through the formula
\begin{eqnarray}
u_{n+1}=e^{i \frac{\tau}{2}f(|w_{n+1}|^2)} w_{n+1}. \label{finalu}
\end{eqnarray}
\subsection{Local error}
Third local order can be achieved in such a way, as the following theorem states for $\rho_n=\hat{u}_{n+1}-u(t_{n+1})$, where $\hat{u}_{n+1}$ is that obtained through (\ref{midstrang}),(\ref{wn1}) and (\ref{finalu}) when starting from $u_n=u(t_n)$.
\begin{theorem}
Let us assume that the solution $u$ of (\ref{nlse}) belongs to
\begin{eqnarray}
\quad C^3([0,T], L^2(\Omega))\cap C([0,T], H^6(\Omega))  \cap C^1([0,T], H^4(\Omega)) \cap C^2([0,T],L^{\infty}(\Omega)),
\label{reg2}
\end{eqnarray}
where $f\in C^2(\mathbb{R}, \mathbb{R})$ and $g\in C^2([0,T], H^{\frac{3}{2}}(\partial \Omega))$. Then, for small enough $\tau$,
\begin{eqnarray}
\|\rho_n\|_{L^2(\Omega)} &\le& C \tau^3 (\|u(t_n)\|_{H^6(\Omega)}+\|\dot{u}(t_n)\|_{H^4(\Omega)}
+\max_{0\le j \le 3} \|u^{(j)}(t_n)\|_{L^2(\Omega)}
\nonumber \\
&&\hspace{6cm}+\max_{0\le j \le 2}\|g^{(j)}(t_n)\|_{H^{\frac{3}{2}}(\partial \Omega)}),
\label{cotaloc}
\end{eqnarray}
for some constant $C$, which does not depend on $\tau$ either on $t_n$.
\label{thloc}
\end{theorem}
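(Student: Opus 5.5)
The plan is to compare the modified Strang step with a "continuous" Strang step whose linear stage is solved exactly, using a well-chosen boundary function $g^n$. The argument is a triangle inequality with two pieces.

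\textbf{Step 1: choice of $g^n$ and the intermediate exact flow.} Starting from $u_n=u(t_n)$, I will construct a function $v^n(s)$, $s\in[0,\tau]$, with the following properties:
\begin{itemize}
\item $v^n(0)=e^{i\frac{\tau}{2}f(|u(t_n)|^2)}u(t_n)$;
\item $\dot v^n=iAv^n$ plus a defect of size $O(\tau^2)$ in $L^2$, uniformly in $s$;
\item $e^{i\frac{\tau}{2}f(|v^n(\tau)|^2)}v^n(\tau)=u(t_{n+1})+O(\tau^3)$.
\end{itemize}
The natural candidate, following \cite{CR}, is the exact solution of the linear problem (\ref{midstrang}) with the boundary
\[
g^n(s)=\partial\Big[e^{i\frac{\tau}{2}f(|u(t_n+s)|^2)}u(t_n+s)\Big]\quad\text{corrected to second order in }s.
\]
More simply, I will take $g^n(s)=\partial v^n(s)$, where
\[
v^n(s)=e^{-i(\frac{\tau}{2}-s)\,\cdot}\ \text{(interpolating nonlinear phase)}\ \cdots
\]
I will choose $v^n(s)$ concretely as the interpolant
\[
v^n(s)=e^{i(\frac{\tau}{2}-s)f(|u(t_n+s)|^2)}u(t_n+s).
\]
This choice has the right values at both ends, so its trace matches (\ref{known}) exactly. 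The key point is that the method (\ref{wn1}) only uses $g^n(0)$ and $g^n(\tau)$, so any smooth interpolant with these endpoint values is admissible.

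\textbf{Step 2: consistency of the exact linear stage.} Let $w^n$ be the exact solution of (\ref{midstrang}) with this $g^n$. I will estimate $w^n(\tau)-v^n(\tau)$ using the variation-of-constants formula for $e^{i sA_0}$ applied to the difference $v^n-w^n$. This difference has zero boundary values, zero initial value, and forcing $\dot v^n-iAv^n$.

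By Taylor expansion of $v^n$ in $s$ and of the phase, the forcing equals
\[
i f(|u|^2)u-i f(|u|^2)u+O\big(s+\tfrac{\tau}{2}\big)\text{-terms}.
\]
These terms combine into the classical Strang local defect. I then use the fact that Strang is symmetric and second order, so integrating the defect gives an $O(\tau^3)$ contribution in $L^2$ provided the defect is itself smooth in $s$. I expect to need the commutator-type quantity $A(f(|u|^2)u)$ with two further derivatives, which is where $u\in C([0,T],H^6)$ and $\dot u\in H^4$ enter: $A$ is applied twice to the products in the second-order Taylor terms.

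Uniform boundedness of $e^{isA_0}$ on $L^2$ and the estimate $\|Kg\|\le C\|g\|_{H^{3/2}}$ take care of the boundary lifting. The $L^\infty$ bounds (\ref{acotK2}) and $u\in C^2([0,T],L^\infty)$ are needed to control $f(|\cdot|^2)$ and its derivatives.

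\textbf{Step 3: error of the rational midpoint stage.} Applying (\ref{erlocmp}) to this $w^n$ and $g^n$ gives
\[
\|w_{n+1}-w^n(\tau)\|\le C\tau^3\Big(\max_{j\le3}\|{w^n}^{(j)}(0)\|+\max_{j\le4}\|{g^n}^{(j)}(0)\|\Big).
\]
Derivatives of $w^n$ are powers of $iA$ applied to the initial datum together with boundary terms, so ${w^n}^{(3)}(0)$ brings in $A^3$ of the initial datum, which is bounded by $\|u(t_n)\|_{H^6}$.

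The $g^n$-derivatives up to order four look like a problem, since the hypotheses give $g$ only in $C^2$. However, $g^n$ is an auxiliary interpolant. I will instead choose $g^n$ as a polynomial in $s$ of degree at most two matching the endpoint values (and a midpoint value). Its fourth derivative then vanishes, and its lower derivatives are controlled by $g,\dot g,\ddot g$ at $t_n$.

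\textbf{Step 4: conclusion and main obstacle.} Finally, the map $w\mapsto e^{i\frac{\tau}{2}f(|w|^2)}w$ is Lipschitz in $L^2$ on $L^\infty$-bounded sets, with constant $1+O(\tau)$. Combining this with Steps 2 and 3 gives the $O(\tau^3)$ bound (\ref{cotaloc}).

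I expect the main obstacle to be Step 2 combined with the boundary choice. The interpolating $g^n$ has to be simultaneously:
\begin{itemize}
\item consistent with the endpoint data (\ref{known});
\item smooth enough for (\ref{erlocmp});
\item such that the exact linear stage reproduces Strang's $O(\tau^3)$ local error without order reduction.
\end{itemize}
These requirements pull in different directions. My first attempt will be to take $g^n$ as the trace of the explicit interpolant $v^n$ above. I will then verify directly that the defect, integrated against $e^{i(\tau-s)A_0}$, is $O(\tau^3)$ via a midpoint-quadrature/symmetry cancellation.
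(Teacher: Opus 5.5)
\textbf{The gap is in Step 2.} It is exactly the order-reduction mechanism the theorem has to overcome.

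Your interpolant $v^n(s)=e^{i(\frac{\tau}{2}-s)f(|u(t_n+s)|^2)}u(t_n+s)$ has the correct endpoint values $H_n=v^n(0)$ and $H_{n+1}=v^n(\tau)$. However, its defect is $O(\tau)$, not $O(\tau^2)$. With $\theta(s)=(\frac{\tau}{2}-s)f(|u(t_n+s)|^2)$ and (\ref{nlse}),
\[
\dot v^n-iAv^n=i\big[e^{i\theta}Au-A(e^{i\theta}u)\big]+i\big(\tfrac{\tau}{2}-s\big)\tfrac{d}{ds}f(|u(t_n+s)|^2)\,v^n=\big(\tfrac{\tau}{2}-s\big)G+O(\tau^2),
\]
where $G=A(f(|u|^2)u)-f(|u|^2)Au+i\big(\tfrac{d}{dt}f(|u|^2)\big)u$ is evaluated at $t_n$. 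Since $v^n-w^n$ has zero initial and boundary values, variation of constants gives
\[
v^n(\tau)-w^n(\tau)=\tau^2\big(\tfrac12\varphi_1-\varphi_2\big)(i\tau A_0)\,G+O(\tau^3).
\]
The weight $\frac{\tau}{2}-s$ does make $\frac12\varphi_1(z)-\varphi_2(z)=\frac{z}{12}+O(z^2)$ vanish at $z=0$; that is your midpoint/symmetry cancellation. But turning this into an extra power of $\tau$ requires writing the operator as $i\tau A_0\,\psi(i\tau A_0)$ applied to $G$. That needs $G\in D(A_0)$, i.e.\ $\partial G=0$.

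Generically $\partial G\neq 0$, since it contains $\frac{d}{dt}f(|g|^2)\,g$ and normal derivatives of $f(|u|^2)$. Then this term is only $\tau^2$ times a fractional power of $\tau$ (about $\tau^{9/4}$ in one dimension). Equivalently, your boundary violates the first compatibility condition at $s=0$:
\[
\tfrac{d}{ds}g^n(0)=\partial(iAH_n)+\tfrac{\tau}{2}\partial G+O(\tau^2).
\]

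\textbf{Step 3 does not repair this.} There you switch to a quadratic $g^n$ with an unspecified midpoint value, but that does not connect with Step 2. The value $w_{n+1}$ sees only $g^n(0)$ and $g^n(\tau)$, whereas $w^n(\tau)$ depends on all of $g^n$. So the triangle inequality needs one and the same $g^n$ in both steps. Moreover, unless the midpoint value is chosen very specifically, the compatibility condition $\frac{d}{ds}g^n(0)=\partial(iAH_n)$ fails and the same reduction reappears.

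\textbf{The missing idea is to fix that midpoint value by compatibility.} The paper does this as follows.
\begin{itemize}
\item It takes $w^{n,b}(s)$, quadratic in $s$, with values $H_n,X_n,H_{n+1}$ at $s=0,\frac{\tau}{2},\tau$, and sets $g^n=\partial w^{n,b}$.
\item It chooses $X_n=\frac14[i\tau AH_n+H_{n+1}+3H_n]$. This is exactly the condition $\dot w^{n,b}(0)=iAH_n$, so the defect has no constant term.
\item It Taylor expands $H_{n+1}-H_n$ and uses (\ref{nlse}) in the form $\ddot u-if(|u|^2)\dot u-if'(|u|^2)(u\dot{\bar u}+\bar u\dot u)u=-A^2u-A(f(|u|^2)u)$. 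Then the $O(\tau^{-1})$ and $O(1)$ parts of the coefficient of $s$ cancel, leaving a defect $s\,O(\tau)+s^2\big(-\tfrac{i}{2}A^3u(t_n)+O(\tau)\big)$ in $L^2$.
\item Consequently $\hat w^n(\tau)-H_{n+1}=\tau^2\varphi_2(i\tau A_0)O(\tau)+\tau^3\varphi_3(i\tau A_0)O(1)=O(\tau^3)$. This uses only the boundedness of $\varphi_j(i\tau A_0)$ on $L^2$, with no boundary condition required of the coefficients.
\end{itemize}
Because this $g^n$ is quadratic, $g^{n(3)}=g^{n(4)}=0$, so (\ref{erlocmp}) applies to the same $g^n$. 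Your Step 4 then finishes the proof as in the paper.
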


\begin{proof}
For the sake of simplicity, let us denote
\begin{eqnarray}
H_n=e^{i \frac{\tau}{2}f(|u(t_n)|^2)}u(t_n), \quad H_{n+1}=e^{-i \frac{\tau}{2}f(|u(t_{n+1})|^2)}u(t_{n+1}).
\label{HH}
\end{eqnarray}
Then, we consider $w^{n,b}(s)$ the polynomial of second degree in $s$ which takes the value $H_n$ at $s=0$, $H_{n+1}$ at $s=\tau$ and a value $X_n$ to be determined at $s=\tau/2$. More precisely,
$$
w^{n,b}(s)=H_n+\frac{H_{n+1}-H_n}{\tau}s+\frac{2(H_{n+1}+H_n-2 X_n)}{\tau^2}s(s-\tau).$$
Let us then denote by $\hat{w}^n$ the solution of (\ref{midstrang}) with initial condition $\hat{w}^n(0)=H_n$ and $g^n(s)=\partial w^{n,b}(s)$, which obviously satisfies (\ref{known}). We notice that $\hat{w}^n(s)-w^{n,b}(s)$ satisfies the following
\begin{eqnarray}
\quad \stackrel{\cdot}{\overbrace{(\hat{w}^n-w^{n,b})}}(s)&=&i A (\hat{w}^n(s)-w^{n,b}(s))+i A H_n-\frac{4 X_n-3 H_n-H_{n+1}}{\tau} \nonumber \\
\quad &&+s \big[\frac{i}{\tau}A(-H_{n+1}-3 H_n+4 X_n)-\frac{4}{\tau^2}(H_{n+1}+H_n-2 X_n)\big] \nonumber \\
\quad &&+\frac{2 i s^2}{\tau^2}A(H_{n+1}+H_n-2 X_n), \nonumber \\
\quad (\hat{w}^n-w^{n,b})(0)&=&0, \nonumber \\
\quad \partial (\hat{w}^n-w^{n,b})(s)&=&0. \label{diferencia}
\end{eqnarray}
Then, if we take $X_n$ such that the second term of the right-hand side of the first line in (\ref{diferencia}) vanishes, i.e.
$$X_n=\frac{1}{4}[ i \tau A H_n+H_{n+1}+3 H_n],$$
it can be deduced that
\begin{eqnarray}
H_{n+1}+H_n-2 X_n&=&\frac{H_{n+1}-H_n}{2}-\frac{i \tau}{2} A H_n. \nonumber
\end{eqnarray}
We then notice that, by Taylor expansions in powers of $\tau$ and using (\ref{nlse}),
\begin{eqnarray}
H_n&=&u+\frac{i \tau}{2}f(|u|^2)u-\frac{\tau^2}{8}f(|u|^2)^2 u- \frac{i \tau^3}{48} f(|u|^2)^3 u +O(\tau^4), \nonumber \\
H_{n+1}-H_n&=& i \tau A u+\frac{\tau^2}{2}[\ddot{u}-i f(|u|^2) \dot{u}-i f'(|u|^2)(u \dot{\bar{u}}+\bar{u}\dot{u})u]+\tau^3 D, \nonumber
\end{eqnarray}
where, for the sake of brevity, we have suppressed the evaluation at $t_n$ and where
\begin{eqnarray}
D&=&\frac{1}{6}\stackrel{\dots}{u}-\frac{i}{4}f'(|u|^2)(\bar{u}\ddot{u}+2 \dot{u}\dot{\bar{u}}+u \ddot{\bar{u}})u-\frac{1}{4}f(|u|^2)f'(|u|^2)(u\dot{\bar{u}}+\bar{u}\dot{u})u \nonumber \\
&&-\frac{i}{2}f'(|u|^2)(u \dot{\bar{u}}+\bar{u} \dot{u})\dot{u}-\frac{1}{8}f(|u|^2)^2 \dot{u}-\frac{i}{4}f(|u|^2) \ddot{u} \nonumber \\
&&+\frac{i}{24}f(|u|^2)^3 u -\frac{i}{4} f''(|u|^2)(u \dot{\bar{u}}+\bar{u}\dot{u})^2 u +O(\tau). \nonumber
\end{eqnarray}
Using this, (\ref{diferencia}) can be written as
\begin{eqnarray}
\stackrel{\cdot}{\overbrace{(\hat{w}^n-w^{n,b})}}(s)&=&i A (\hat{w}^n(s)-w^{n,b}(s)) \nonumber \\
&&+s \big[-\frac{3 i \tau}{4}A^2 \big( f(|u(t_n)|^2) u(t_n)\big)-4 \tau D(t_n) +O(\tau^2)\big] \nonumber \\
&&+ s^2 \big[ -\frac{i}{2} A^3 u (t_n)+O(\tau) \big], \nonumber \\
(\hat{w}^n-w^{n,b})(0)&=&0, \nonumber \\
\partial (\hat{w}^n-w^{n,b})(s)&=&0. \label{diferenciab}
\end{eqnarray}
By using then the variation-of-constants formula and the definition of the $\varphi_j$-functions \cite{HO} , which are well-known to imply that
\begin{eqnarray}
\varphi_j( t A_0)=\frac{1}{t^j} \int_0^t
e^{(t-\tau)A_0}\frac{\tau^{j-1}}{(j-1)!}d\tau, \quad j \ge 1,
\nonumber
\end{eqnarray}
it follows that
\begin{eqnarray}
\hat{w}^n(s)-w^{n,b}(s)&=&s^2 \tau \varphi_2(i s A_0)\big[-\frac{3 i}{4}A^2 \big( f(|u(t_n)|^2) u(t_n)\big)-4 D(t_n) +O(\tau)\big]
\nonumber \\
 &&+s^3  \varphi_3(i s A_0)[-\frac{i}{2} A^3 u(t_n)+O(\tau)].
\label{exprphi}
\end{eqnarray}
Therefore, as $\|\varphi_2(i s A_0)\|_{L^2(\Omega)}$ and $\|\varphi_3(i s A_0)\|_{L^2(\Omega)}$ are bounded,
\begin{eqnarray}
\|\hat{w}^n(\tau)-H_{n+1}\|_{L^2(\Omega)}&=&\|\hat{w}^n(\tau)-w^{n,b}(\tau)\|_{L^2(\Omega)} \nonumber \\
&\le& C \tau^3 (\|u(t_n)\|_{H^6(\Omega)}+\|\dot{u}(t_n)\|_{H^4(\Omega)}+\max_{0\le j \le 3} \|u^{(j)}(t_n)\|_{L^2(\Omega)}),
\nonumber
\end{eqnarray}
where we have used that, from (\ref{nlse}),
$$A^2 \big( f(|u|^2)u\big)=-i A^2 \dot{u}-A^3 u,$$
and the assumed regularity (\ref{reg2}).

Considering now $\hat{w}_{n+1}$ as the approximation to $\hat{w}^n(\tau)$ given by the rational-like midpoint rule without order reduction, and using (\ref{erlocmp}) and the fact that
\begin{eqnarray}
&&\hat{w}^n(0)=H_n, \, \hat{w}^{n(1)}(0)=i A H_n, \, \hat{w}^{n(2)}(0)=-A^2 H_n, \, \hat{w}^{n(3)}(0)=-i A^3 H_n,\nonumber \\
&&g^n(0)=\partial H_n, \,  g^{n(1)}(0)=\partial [i A H_n],
\, g^{n(2)}(0)=-\partial A^2 u(t_n)+O(\tau), \, g^{n(j)}(0)=0, \, j=3,4, \nonumber
\end{eqnarray}
it happens that
\begin{eqnarray}
\|\hat{w}_{n+1}-H_{n+1}\|_{L^2(\Omega)}&\le& \|\hat{w}_{n+1}-\hat{w}^n(\tau)\|_{L^2(\Omega)}+\|\hat{w}^n(\tau)-H_{n+1}\|_{L^2(\Omega)}
\nonumber \\
&\le & C \tau^3 (\|u(t_n)\|_{H^6(\Omega)}+\|\dot{u}(t_n)\|_{H^4(\Omega)}+\max_{0\le j \le 3} \|u^{(j)}(t_n)\|_{L^2(\Omega)}
\nonumber \\
&& \hspace{2cm}+ \max_{0\le j \le 2}\| \partial A^j u(t_n)\|_{H^{\frac{3}{2}}(\partial \Omega)}).
\label{cot1}
\end{eqnarray}
We now notice that
\begin{eqnarray}
|\hat{w}_{n+1}|^2-|H_{n+1}|^2=(\hat{w}_{n+1}-H_{n+1}) \bar{\hat{w}}_{n+1}+H_{n+1}(\bar{\hat{w}}_{n+1}-\bar{H}_{n+1}),
\label{for1}
\end{eqnarray}
where both $H_{n+1}$ and $\bar{\hat{w}}_{n+1}$ belong to $L^\infty(\Omega)$ because of (\ref{reg2}),(\ref{acotK2}) and Hille-Yoshida theorem using that, from (\ref{wn1}),
\begin{eqnarray}
\hat{w}_{n+1}&=&-H_n+K [\partial H_n+\partial H_{n+1}]+(I-\frac{i \tau}{2} A_0)^{-1}\bigg[2 H_n- K[\partial H_n+\partial H_{n+1}]\bigg].
\nonumber
\end{eqnarray}
Therefore,
$$
\| |\hat{w}_{n+1}|^2-|H_{n+1}|^2\|_{L^2(\Omega)}\le C \|\hat{w}_{n+1}-H_{n+1}\|_{L^2(\Omega)},$$
which implies that
$$
\| f(|\hat{w}_{n+1}|^2)-f(|H_{n+1}|^2)\|_{L^2(\Omega)}\le C \|\hat{w}_{n+1}-H_{n+1}\|_{L^2(\Omega)},$$
and thus, using that for any $\theta_1$ and $\theta_2$,
\begin{eqnarray}
|e^{i \theta_1}-e^{i \theta_2}|\le |\theta_1-\theta_2|,
\label{theta12}
\end{eqnarray}
it follows that
\begin{eqnarray}
\| e^{i \frac{\tau}{2}f(|\hat{w}_{n+1}|^2)}-e^{i \frac{\tau}{2}f(|H_{n+1}|^2)}\|_{L^2(\Omega)}\le C \tau \|\hat{w}_{n+1}-H_{n+1}\|_{L^2(\Omega)}.
\label{cot2}
\end{eqnarray}
Then, using (\ref{HH}),
\begin{eqnarray}
\hat{u}_{n+1}&=&e^{i \frac{\tau}{2} f(|\hat{w}_{n+1}|^2)} \hat{w}_{n+1}
\nonumber \\
&=& e^{i \frac{\tau}{2} f(|\hat{w}_{n+1}|^2)}(\hat{w}_{n+1}-H_{n+1})+\big[e^{i \frac{\tau}{2}f(|\hat{w}_{n+1}|^2)}-e^{i \frac{\tau}{2}f(|H_{n+1}|^2)} \big]H_{n+1}+u(t_{n+1}),  \nonumber
\end{eqnarray}
from what (\ref{cotaloc}) follows using (\ref{cot1}) and the fact that $\partial A^j u(t_n)$ and $g^{(j)}(t_n)$ are closely related through (\ref{nlse}), and then considering (\ref{cot2}) and that $e^{i \frac{\tau}{2}f(|\hat{w}_{n+1}|^2)}$ and $H_{n+1}$ belong to $L^\infty(\Omega)$.
\end{proof}



\subsection{Global error }

Although quite standard, we include here a thorough analysis for the global error for the sake of completeness. We will consider the notation $e_n=u(t_n)-u_n$.

\begin{theorem} Under the assumptions of Theorem \ref{thloc} and assuming that the numerical solution also remains bounded in $L^\infty(\Omega)$, it happens that
\begin{eqnarray}
\|e_n\|_{L^2(\Omega)} \le C e^{C' t_n} \tau^2 \max_{0 \le t \le T} (\|u(t)\|_{H^6(\Omega)}+\|\dot{u}\|_{H^4(\Omega)}+\max_{0\le j \le 3} \|u^{(j)}(t)\|_{L^2(\Omega)} \nonumber \\
\hspace{4cm}+\|u(t)\|_{H^4(\partial \Omega)}),
\label{cotaglob2}
\end{eqnarray}
where $C'$ depends on the bound of the numerical solution.
\label{thglob2}
\end{theorem}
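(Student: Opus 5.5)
The plan is the standard Lady Windermere's fan argument: stability of one step of the modified Strang method plus the local error bound of Theorem \ref{thloc}. Write
$$e_{n+1}=u(t_{n+1})-u_{n+1}=\big(\hat{u}_{n+1}-u_{n+1}\big)-\rho_n,$$
where $\hat{u}_{n+1}$ is one step of the method started from $u(t_n)$ and $u_{n+1}$ is the same step started from $u_n$. The term $\rho_n$ is $O(\tau^3)$ by (\ref{cotaloc}); note that $\max_{0\le j\le 2}\|g^{(j)}\|_{H^{3/2}(\partial\Omega)}$ is controlled through the trace theorem and (\ref{nlse}) by $\|u\|_{H^4}$-type quantities, which explains the extra boundary norm in (\ref{cotaglob2}). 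The core is therefore to prove the stability estimate
\begin{equation*}
\|\hat{u}_{n+1}-u_{n+1}\|_{L^2(\Omega)}\le (1+C'\tau)\|e_n\|_{L^2(\Omega)}.
\end{equation*}
Once this holds, the recursion $\|e_{n+1}\|\le(1+C'\tau)\|e_n\|+C\tau^3 M$ with $e_0=0$ gives $\|e_n\|\le C\tau^2 M\,n\tau\,e^{C't_n}$, which is (\ref{cotaglob2}) after absorbing $t_n\le T$ into the constant.

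For stability I would follow the three substeps of the method. First, the nonlinear half step. For $v_1=u(t_n)$ and $v_2=u_n$, both bounded in $L^\infty$ (the exact one by (\ref{reg2}), the numerical one by hypothesis), I split
$$e^{i\frac{\tau}{2}f(|v_1|^2)}v_1-e^{i\frac{\tau}{2}f(|v_2|^2)}v_2=e^{i\frac{\tau}{2}f(|v_1|^2)}(v_1-v_2)+\big[e^{i\frac{\tau}{2}f(|v_1|^2)}-e^{i\frac{\tau}{2}f(|v_2|^2)}\big]v_2 .$$
Using (\ref{theta12}), the local Lipschitz property of $f$ on bounded sets, and an identity like (\ref{for1}), this is bounded by $(1+C\tau)\|v_1-v_2\|_{L^2}$. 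The same argument applies to the final half step (\ref{finalu}). Here I need $w_{n+1}$ and $\hat{w}_{n+1}$ to be bounded in $L^\infty$. For $\hat{w}_{n+1}$ this was shown in the proof of Theorem \ref{thloc}. For $w_{n+1}$ it follows from the boundedness of $u_n$, $u_{n+1}$ and the same representation (\ref{wn1}) with (\ref{acotK2}); alternatively one can simply include it in the hypothesis ``numerical solution bounded in $L^\infty$''.

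Second, the linear stage (\ref{wn1}). Both computations use the same boundary data $g^n(0)$ and $g^n(\tau)$ from (\ref{known}), since these depend only on $g$ and not on the numerical solution. So the $K$-terms cancel in the difference, and only
$$[-I+2(I-\tfrac{i\tau}{2}A_0)^{-1}](\hat{w}^n(0)-w^n(0))=r(i\tau A_0)(\hat{w}^n(0)-w^n(0))$$
remains. Since $A_0$ is self-adjoint and nonpositive, $i\tau A_0$ is skew-adjoint, and $r$ from (\ref{r}) is the Cayley transform, so $\|r(i\tau A_0)\|_{L^2\to L^2}=1$. Hence this stage is an exact isometry on differences.

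The step I expect to be the main obstacle is the $L^\infty$ control needed in the nonlinear substeps. The $L^2$ Lipschitz bound for $v\mapsto e^{i\frac{\tau}{2}f(|v|^2)}v$ requires both arguments in $L^\infty$, since $\||v_1|^2-|v_2|^2\|_{L^2}\le(\|v_1\|_\infty+\|v_2\|_\infty)\|v_1-v_2\|_{L^2}$. This is exactly why the statement assumes the numerical solution stays bounded in $L^\infty$ and why $C'$ depends on that bound. I would make the intermediate quantities $w_{n+1}$ explicit and argue they inherit the $L^\infty$ bound, via (\ref{acotK2}) and $L^\infty$ boundedness of $(I-\frac{i\tau}{2}A_0)^{-1}$. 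If that fails, I would assume it as part of the numerical boundedness hypothesis.
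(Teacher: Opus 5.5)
Your proposal is correct and follows essentially the paper's proof. It uses the same ingredients:
\begin{itemize}
\item the splitting of $e_{n+1}$ into the local error plus the propagated difference;
\item the $(1+C\tau)$ Lipschitz bound for the nonlinear half steps, which is the paper's $E(v_1,v_2)$ term obtained from (\ref{theta12}) and (\ref{cotfl2});
\item the cancellation of the boundary terms together with the unitarity of $r(i\tau A_0)$ in the linear stage;
\item a discrete Gronwall argument. The paper unrolls the recursion explicitly, while you use a one-step stability constant, which is equivalent.
\end{itemize}

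One small simplification: the $L^\infty$ bound on $w_{n+1}$ needs no resolvent estimate. By (\ref{finalu}), $|w_{n+1}|=|u_{n+1}|$ pointwise, so it follows directly from the assumed bound on the numerical solution.
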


\begin{proof}
As already argued in the proof of Theorem \ref{thloc} (see (\ref{for1})), whenever $v_1,v_2 \in L^\infty(\Omega)$,
\begin{eqnarray}
\||v_1|^2-|v_2|^2\|_{L^2(\Omega)}\le C' \|v_1-v_2\|_{L^2(\Omega)},
\nonumber
\end{eqnarray}
where $C'$ depends on the boundedness of $v_1$ and $v_2$. Then, using that $f\in C^1(\mathbb{R},\mathbb{R})$, for another constant $C'$ which also depends on the boundedness of $f'$ in the interval where $|v_1|^2$ and $|v_2|^2$ stay, it follows that
\begin{eqnarray}
\|f(|v_1|^2)-f(|v_2|^2)\|_{L^2(\Omega)}\le C' \|v_1-v_2\|_{L^2(\Omega)}.
\label{cotfl2}
\end{eqnarray}
Then, using that
\begin{eqnarray}
e^{i \frac{\tau}{2}f(|v_1|^2)}v_1-e^{i \frac{\tau}{2}f(|v_2|^2)}v_2= e^{i \frac{\tau}{2}f(|v_1|^2)}(v_1-v_2)+\big[ e^{i \frac{\tau}{2}f(|v_1|^2)}-e^{i \frac{\tau}{2}f(|v_2|^2)}\big]v_2,
\nonumber
\end{eqnarray}
and also (\ref{theta12}) and (\ref{cotfl2}), it follows that
$$e^{i \frac{\tau}{2}f(|v_1|^2)}v_1-e^{i \frac{\tau}{2}f(|v_2|^2)}v_2= e^{i \frac{\tau}{2}f(|v_1|^2)}(v_1-v_2)+\frac{\tau}{2}E(v_1,v_2),$$
where
\begin{eqnarray}
\|E(v_1,v_2)\|_{L^2(\Omega)} \le C' \|v_1-v_2\|_{L^2(\Omega)},
\label{acotE}
\end{eqnarray}
for some other constant $C'$. From here,
\begin{eqnarray}
e_{n+1}&=&(u(t_{n+1})-\hat{u}_{n+1})+(\hat{u}_{n+1}-u_{n+1})
\nonumber \\
&=&\rho_{n+1}+[e^{i \frac{\tau}{2}f(|\hat{w}_{n+1}|^2)}\hat{w}_{n+1}-e^{i \frac{\tau}{2}f(|w_{n+1}|^2)}w_{n+1}] \nonumber \\
&=& \rho_{n+1}+e^{i \frac{\tau}{2}f(|\hat{w}_{n+1}|^2)}(\hat{w}_{n+1}-w_{n+1})+\frac{\tau}{2} E(\hat{w}_{n+1},w_{n+1}). \label{recur}
\end{eqnarray}
Now, using (\ref{wn1}), for the stability function of midpoint rule (\ref{r}),
\begin{eqnarray}
\hat{w}_{n+1}-w_{n+1}&=&r(i \tau A_0)[e^{i \frac{\tau}{2}f(|u(t_n)|^2)}u(t_n)-e^{i \frac{\tau}{2}f(|u_n|^2)}u_n] \nonumber \\
&=&r(i \tau A_0)[e^{i \frac{\tau}{2}f(|u(t_n)|^2)} e_n+\frac{\tau}{2} E(u(t_n), u_n)], \nonumber
\end{eqnarray}
which, inserted in (\ref{recur}), implies that
$$
e_{n+1}=\rho_{n+1}+e^{i \frac{\tau}{2}f(|\hat{w}_{n+1}|^2)} r(i \tau A_0) e^{i \frac{\tau}{2}f(|u(t_n)|^2)}e_n+\tau \bar{E}(u(t_n), u_n),$$
where, using (\ref{acotE}) and that $\|r(i \tau A_0)\|_{L^2(\Omega)}=1$,
\begin{eqnarray}
\|\bar{E}(u(t_n), u_n)\|_{L^2(\Omega)}\le C' \|e_n\|_{L^2(\Omega)}.
\label{cotF}
\end{eqnarray}
Using that $e_0=0$, it follows inductively that
\begin{eqnarray}
e_n&=&\sum_{l=1}^n \bigg[\prod_{m=l}^{n-1} e^{i \frac{\tau}{2}f(|\hat{w}_{m+1}|^2)}r(i \tau A_0) e^{i \frac{\tau}{2}f(|u(t_m)|^2)} \bigg]\rho_l
\nonumber \\
&&+\tau \sum_{l=0}^{n-1} \bigg[\prod_{m=l+1}^{n-1} e^{i \frac{\tau}{2}f(|\hat{w}_{m+1}|^2)}r(i \tau A_0) e^{i \frac{\tau}{2}f(|u(t_m)|^2)}\bigg] \bar{E}(u(t_l),u_l).
\nonumber
\end{eqnarray}
Taking norms, using that $\|r(i \tau A_0)\|_{L^2(\Omega)}=1$ and (\ref{cotF}), it follows that
$$
\|e_n\|_{L^2(\Omega)} \le n \max_{l=1,\dots,n} \|\rho_l\|_{L^2(\Omega)}+\tau C' \sum_{l=0}^{n-1} \|e_l\|_{L^2(\Omega)},
$$
and from discrete Gronwall inequality the result follows.
\end{proof}



\begin{remark}
As stated in the abstract,
local order $3$ and thus global order $2$ can also be obtained without resorting to numerical differentiation in other more general problems with the same (or very similar) modified Strang method described above. This is thoroughly described in the appendix considering an abstract framework of Banach spaces which includes regular enough reaction-diffusion problems under time-dependent Dirichlet boundary conditions (c.f. \cite{ACR}).
\end{remark}

\subsection{Implementation}
\label{implem}

When implementing this by using a general space discretization in which an $N\times N$ matrix $A_{N,0}$ approximates $A_0$ when applied to some coefficients associated to a numerical solution with vanishing boundary conditions, and where $K g$ is approximated by the solution of the system
\begin{eqnarray}
A_{N,0} U_h+ C_N g=0, \nonumber
\end{eqnarray}
i.e by $-A_{N,0}^{-1} C_N g$, for some operator $C_N: H^\frac{3}{2}(\partial \Omega) \to \mathbb{R}^N$, the stage (\ref{wn1}) reads as follows
\begin{eqnarray}
W_N^{n+1}&=&-W_N^n-A_{N,0}^{-1} C_N [g^n(0)+g^n(\tau)] \nonumber \\
&&+(I-\frac{i \tau}{2} A_{N,0})^{-1}\bigg[2 W_N^n+A_{N,0}^{-1} C_N[g^n(0)+g^n(\tau)]\bigg]
\nonumber \\
&=&-W_N^n +(I-\frac{i \tau}{2} A_{N,0})^{-1}\bigg[2 W_N^n+A_{N,0}^{-1} C_N[g^n(0)+g^n(\tau)] \nonumber \\
&& \hspace{5cm}
-(I-\frac{i \tau}{2} A_{N,0})A_{N,0}^{-1} C_N[g^n(0)+g^n(\tau)] ]\bigg] \nonumber \\
&=&-W_N^n +(I-\frac{i \tau}{2} A_{N,0})^{-1}\bigg[2 W_N^n+\frac{i \tau}{2} C_N[g^n(0)+g^n(\tau)]\bigg]. \label{formula}
\end{eqnarray}
We notice that this implies solving just one linear system at this stage.

Then, each step of the modified Strang method consists of applying (\ref{formula}) with
$$W_N^n=e^{i \frac{\tau}{2} f(|U_N^{n}|^2)} U_N^n,$$
 where $U_N^n$ contains the information of the numerical approximation to $u(t_n)$ at the interior nodes, and then to calculate
$$U_N^{n+1}=e^{i \frac{\tau}{2} f(|W_N^{n+1}|^2)}W_N^{n+1},$$
as the approximation to $u(t_{n+1})$.

We also remark that the symmetry of the interior stage (\ref{wn1}) is conserved after space discretization and that the exact resolution of the symmetric initial and final stages lead to the symmetry of the whole step of the modified Strang method which is suggested here.

\section{Modified splitting methods of arbitrary order}
\label{Ssplitanyorder}

In this section we will justify that, by using as basis method the modified Strang method of the previous one and applying Yoshida's idea \cite{Y} of concatenating suitable timestepsizes of that basis method, arbitrary order can be achieved when integrating regular enough solutions of NLS equation under time-dependent Dirichlet boundary conditions.

More precisely, in a similar way to \cite{Y}, we will denote by $\tilde{S}_{2nd}(\tau)u_n$ the modified Strang method which consists of applying (\ref{midstrang}), (\ref{wn1}), (\ref{finalu}) when advancing a stepsize $\tau$ in the numerical integration of (\ref{nlse}).

As stated in \cite{Y}, by using Baker-Campbell-Hausdorff formula (BCH), when integrating with Strang method
$$
\dot{u}=(C+D)u$$
for two non-commutative operators $C$ and $D$,  advancing a stepsize $\tau$ consists of applying $S_{2nd}(\tau)u_n$ where
\begin{eqnarray}
S_{2nd}(\tau)=e^{\tau(C+D)+\tau^3 \alpha_3+\tau^5 \alpha_5+\tau^7 \alpha_7+\dots},
\label{S2}
\end{eqnarray}
where
\begin{eqnarray}
\alpha_3:= \frac{1}{12}[D,D,C]-\frac{1}{24}[C,C,D], \quad \alpha_5:= \frac{7}{5760}[C,C,C,C,D], \dots
\label{alphas}
\end{eqnarray}
(Here the notation of the commutator $[X,Y]:=XY-YX$ is used as well as $[X,X,Y]=[X,[X,Y]]$ for higher order commutators.) Then, there exist coefficients $w_0,w_1,\dots,w_m$, for which
\begin{eqnarray}
S^{(m)}(\tau):= S_{2nd}(w_m \tau)\dots S_{2nd}(w_1 \tau)S_{2nd}(w_0 \tau)S_{2nd}(w_1 \tau)\dots S_{2nd}(w_m \tau)
\label{Sm}
\end{eqnarray}
leads to higher order methods. Those coefficients are deduced by applying algebraically  BCH formula starting from the corresponding expressions $S_{2nd}(w_j \tau)$ ($j=0,\dots,m$) in (\ref{S2}).

Our suggestion in this paper is thus to consider
\begin{eqnarray}
\tilde{S}^{(m)}(\tau):= \tilde{S}_{2nd}(w_m \tau)\dots \tilde{S}_{2nd}(w_1 \tau)\tilde{S}_{2nd}(w_0 \tau)\tilde{S}_{2nd}(w_1 \tau)\dots \tilde{S}_{2nd}(w_m \tau)
\label{Stildem}
\end{eqnarray}
when advancing a stepsize $\tau$ in the numerical integration of (\ref{nlse}).

We firstly notice that, when the solution $u$ in (\ref{nlse}) satisfies
$$
u\in C^{(p+1)}([0,T],L^2(\Omega)),
$$
with
$$
g\in C^{(p+1)}([0,T],H^{\frac{3}{2}}(\partial \Omega)),
$$
the expression which is obtained through (\ref{midstrang}), (\ref{wn1}), (\ref{finalu}) when starting from $u_n=u(t_n)$ has an asymptotic expansion on powers of $\tau$ with a remainder which is of size $O(\tau^{p+1})$ in the $L^2$-norm.
The same happens for the exact solution, so the local error also has an asymptotic expansion in terms of powers of $\tau$.

We will now take into account that (\ref{nlse}) could have been discretized in space by
\begin{eqnarray}
\dot{U}_N=i (A_{N,0}U_N+ C_N g)+i f(|U_N|.^2). U_N,
\label{nlsed}
\end{eqnarray}
where the operators $A_{N,0}$ and $C_N$ are those described in Subsection \ref{implem} and $\cdot$ denotes the pointwise product.
If we now neglect the error coming from the space discretization, we can apply the theory for ODEs in \cite{Hairer_IG}, which assures that
the numerical solution formally satisfies a modified differential equation where the vector field depends on the timestepsize $\tau$. Moreover, as the method is symmetric and of second order, that vector field is a perturbation of the differential system in (\ref{nlsed}) where just even powers of $\tau$ turn up. Because of that, the modified Strang method in Section 4 after space discretization can be represented by the operator
$$
\tilde{S}_{2nd,N}(\tau)=e^{\tau(\tilde{C}_N+\tilde{D}_N)+\tau^3 \tilde{\alpha}_{3,N}+\tau^5 \tilde{\alpha}_{5,N}+\tau^7 \tilde{\alpha}_{7,N}+\dots},$$
where
$$\tilde{C}_N=A_{N,0}+C_N \partial_N, \quad \tilde{D}_N U_N=i f(|U_N|.^2). U_N,$$
and $\tilde{\alpha}_{3,N}$, $\tilde{\alpha}_{5,N}$, $\tilde{\alpha}_{7,N}$ are some operators which are in some way related to the coefficients of the local error of the modified Strang method, but which do not coincide with those in (\ref{alphas}) for $\tilde{C}_N$ and $\tilde{D}_N$.

We notice that, although $\tilde{D}_N$ is not linear, the Magnus expansion has also sense for nonlinear operators \cite{BCOR} and its convergence is valid for small enough $\tau$ \cite{AG}. Then, the algebraic expressions which lead to deduce the values $w_0,\dots,w_m$ in order to obtain order $p$ in Yoshida's paper \cite{Y} also lead now to the same order when considering (\ref{Stildem}).

The fact that global order $p$ is obtained when local error is $O(\tau^{p+1})$ is not detailed here for the sake of simplicity, but a similar (but more intricate proof) to that of Theorem \ref{thglob2}
can be done. In any case, we want to remark here that it is important that the eigenvalues of $i A_{N,0}$ are imaginary. As some of the values $w_j$ $(j=0,\dots,m)$ are always negative when searching for order $\ge 4$, considering other equations (like parabolic equations) or space discretizations for which the eigenvalues of $i A_{N,0}$ had negative real part would necessarily lead to instabilities  because $\|r(w \tau i A_{N,0})^n\|$ would not be bounded as $n$ increases any more.

\section{Numerical experiments}

\begin{table}[t]
\def\arraystretch{1.1}
\caption{Local errors (discrete $L^{2}$ norm) and estimated orders of convergence for three different splitting methods: Strang + spectral, Yoshida-Strang 4 + spectral and Yoshida-Strang 6 + spectral. Each method uses its own sequence of time steps optimized for local error analysis.\label{tab:errores_locales_datos}}
\begin{center}
\vspace{-3mm}
\small
\begin{tabular}{rrrrrrrrrr}
\hline
\multicolumn{3}{c}{Strang} & \multicolumn{3}{c}{Yoshida-Strang 4} & \multicolumn{3}{c}{Yoshida-Strang 6} \\
\multicolumn{1}{c}{step size} & \multicolumn{1}{c}{$L^{2}$ error} & \multicolumn{1}{c}{ord} &
\multicolumn{1}{c}{step size} & \multicolumn{1}{c}{$L^{2}$ error} & \multicolumn{1}{c}{ord} &
\multicolumn{1}{c}{step size} & \multicolumn{1}{c}{$L^{2}$ error} & \multicolumn{1}{c}{ord} \\
\hline
1.00e-03 & 6.412e-09 & --       & 1.000e-05 & 8.860e-20 & --        & 1.000e-05 & 2.441e-21 & --         \\
5.00e-04 & 8.011e-10 & 3.0      & 5.000e-06 & 6.079e-21 & 3.9      & 5.000e-06 & 6.841e-23 & 5.2       \\
2.50e-04 & 1.001e-10 & 3.0      & 2.500e-06 & 2.549e-22 & 4.6      & 2.500e-06 & 7.686e-25 & 6.5       \\
1.25e-04 & 1.251e-11 & 3.0      & 1.250e-06 & 8.667e-24 & 4.9      & 1.250e-06 & 6.608e-27 & 6.9       \\
6.25e-05 & 1.564e-12 & 3.0      & 6.250e-07 & 2.768e-25 & 5.0      & 6.250e-07 & 5.283e-29 & 7.0       \\
\hline
\end{tabular}
\vspace{-3mm}
\end{center}
\end{table}

\begin{table}[t]
\def\arraystretch{1.1}
\caption{Local errors ($L^{\infty}$ norm) and estimated orders of convergence for three different splitting methods: Strang + spectral, Yoshida-Strang 4 + spectral and Yoshida-Strang 6 + spectral. Each method uses its own sequence of time steps optimized for local error analysis.\label{tab:errores_locales_datosinf}}
\begin{center}
\vspace{-3mm}
\small
\begin{tabular}{rrrrrrrrrr}
\hline
\multicolumn{3}{c}{Strang} & \multicolumn{3}{c}{Yoshida-Strang 4} & \multicolumn{3}{c}{Yoshida-Strang 6} \\
\multicolumn{1}{c}{step size} & \multicolumn{1}{c}{$L^{\infty}$ error} & \multicolumn{1}{c}{ord} &
\multicolumn{1}{c}{step size} & \multicolumn{1}{c}{$L^{\infty}$ error} & \multicolumn{1}{c}{ord} &
\multicolumn{1}{c}{step size} & \multicolumn{1}{c}{$L^{\infty}$ error} & \multicolumn{1}{c}{ord} \\
\hline
1.00e-03 & 5.953e-09 & --       & 1.000e-05 & 8.635e-19 & --        & 1.000e-05 & 2.283e-20 & --         \\
5.00e-04 & 7.443e-10 & 3.0      & 5.000e-06 & 5.838e-20 & 3.9      & 5.000e-06 & 6.371e-22 & 5.2       \\
2.50e-04 & 9.305e-11 & 3.0      & 2.500e-06 & 2.434e-21 & 4.6     & 2.500e-06 & 7.153e-24 & 6.5       \\
1.25e-04 & 1.163e-11 & 3.0      & 1.250e-06 & 8.261e-23 & 4.9      & 1.250e-06 & 6.149e-26 & 6.9       \\
6.25e-05 & 1.454e-12 & 3.0      & 6.250e-07 & 2.638e-24 & 5.0      & 6.250e-07 & 4.917e-28 & 7.0       \\
\hline
\end{tabular}
\vspace{-3mm}
\end{center}
\end{table}

In this section we will corroborate the previous results by integrating a regular enough solution of NLS equation in a bounded space interval with time-dependent boundary conditions. More particularly, we will consider (\ref{nlse}) with $f(x)=8x$ , $\Omega=(-1,1)$ and $u_0$ and $g$ such that the exact solution is
$$
u(x,t)=e^{i t} \mbox{sech}(x) \frac{1+\frac{3}{4}\mbox{sech}(x)^2(e^{8it}-1)}{1-\frac{3}{4}\mbox{sech}(x)^4 \sin(4t)^2}.$$

In such a way, the hypotheses of Theorems \ref{thloc} are satisfied. We have integrated this problem till time $T=1$ considering a collocation Gauss-Lobatto spectral method \cite{BM,CHQZ} which is based on $N=50$ nodes, so that the error in space can be considered negligible even with quadruple precision machine accuracy.

In a first place, we have integrated the problem in time with the modified Strang method using the implementation which is well described in Subsection \ref{implem}. Then, we have measured the error in the $L^2$-norm using the Gauss-Lobatto quadrature rule which is based on the same nodes of the space discretization. In the first three columns of Table \ref{tab:errores_locales_datos}, we can see that local order $3$ turns up, and we have managed that local order without any differentiation of data. Although  not theoretically proved, the same happens when using the maximum norm of the error on the same Gauss-Lobatto nodes, as it is shown in the first three columns of Table \ref{tab:errores_locales_datosinf}. Then, the second order on the global error, which is stated in Theorem \ref{thglob2} for the $L^2$-norm, can be observed in Figures \ref{f1}, and also in Figure \ref{f2} for the $L^\infty$-norm.

In a second place, we have integrated (\ref{nlse}) in time with two modified splitting methods which are based on two symmetric splitting methods deduced by Yoshida from Strang method. More particularly, the $4$th-order method in Section 4 of \cite{Y} for which $m=1$ in  (\ref{Stildem}) and
$$
w_0=-\frac{2^{\frac{1}{3}}}{2-2^{\frac{1}{3}}}, \quad w_1=\frac{1}{2-2^{\frac{1}{3}}},$$
and the $6$th-order method in the last column of Table 1 in the same paper for which $m=3$.

We notice that the solution is regular enough so that all hypotheses in Section \ref{Ssplitanyorder} of this paper apply. Then, we can corroborate in the last columns of Tables \ref{tab:errores_locales_datos} and \ref{tab:errores_locales_datosinf} that local orders $5$ and $7$ are respectively obtained when sufficiently decreasing the time stepsize. This explains the global respective orders $4$ and $6$ which both methods tend to show in Figures \ref{f1} and \ref{f2} when considering not so small time stepsizes. We want to remark here that each step of the 4th-order method is three times more expensive than each step of Strang method because $m=1$ and each step of the 6th-order method is seven times more expensive than each step of Strang method because $m=3$. However, even taking that into account, in order to obtain an error like $10^{-4}$, the 4th and 6th order methods would be cheaper than Strang method. Moreover, the higher the accuracy which is searched for, the more the advantages of higher order methods against the lower ones.

As a conclusion, we remark again the big gain in efficiency which we have obtained with the modification of splitting methods which are based on Strang integrator. Never in the literature local order $\ge 3$ had been obtained when integrating (\ref{nlse}) with general time-dependent boundary conditions using the favourable properties of splitting methods and without using neither analytical nor numerical differentiation of data.  In this paper, we manage to get local and global order as high as desired under those conditions, and what is more, in a computionally efficient way.

\begin{figure*}
\centerline{\includegraphics[width=100mm]{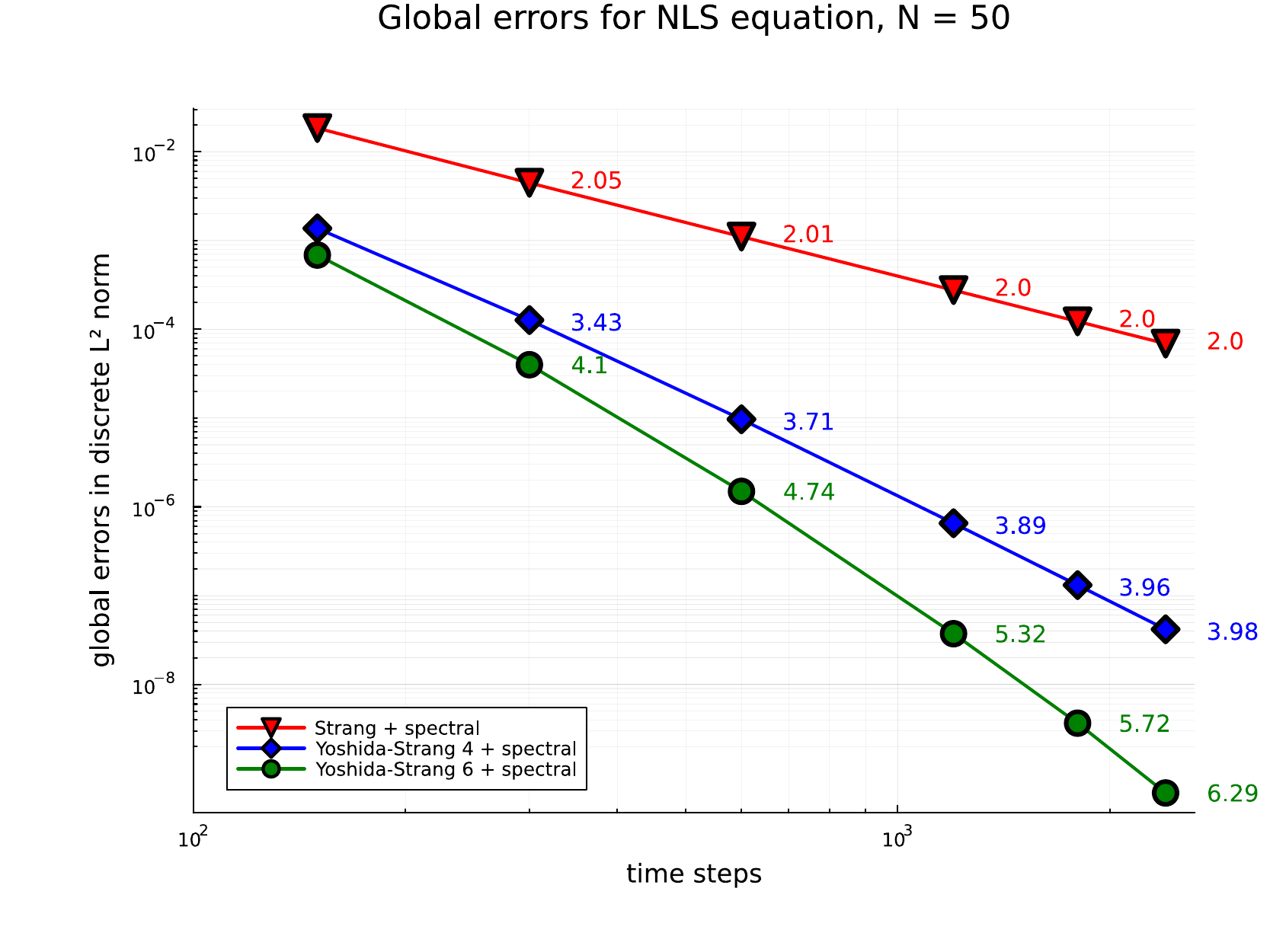}}
\caption{$L^2$-error against number of time steps when integrating problem (\ref{nlse}) with modified Strang method and modified Yoshida splitting methods of orders 4 and 6.} \label{f1}
\end{figure*}

\begin{figure*}
\centerline{\includegraphics[width=100mm]{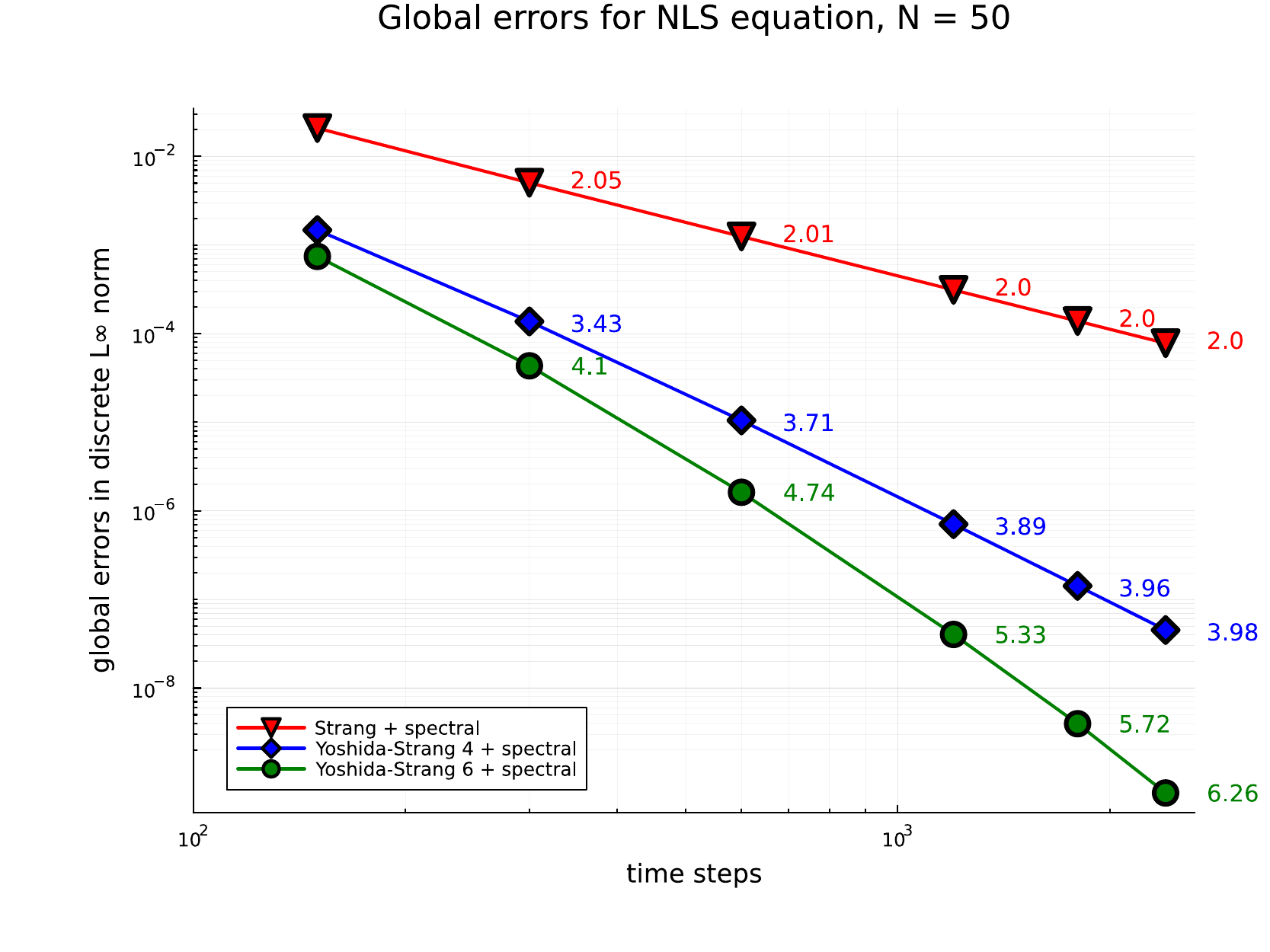}}
\caption{Maximum-error against number of time steps when integrating problem (\ref{nlse}) with modified Strang method and modified Yoshida splitting methods of orders 4 and 6. } \label{f2}
\end{figure*}

\section{Appendix}

In this appendix we consider the abstract non-homogeneous initial boundary value problem
\begin{eqnarray}
\dot{u}(t)&=& A u(t) +h(t,u(t)), \quad 0 \le t \le T,\nonumber \\
u(0)&=&u_0\in X, \nonumber \\
\partial u(t)&=&g(t)\in Y, \label{abstract}
\end{eqnarray}
where $T>0$, $A: D(A) \subset X \to X$ is a linear operator, $X$ and $Y$ are Banach spaces with respective norms $\|\cdot\|_X$ and $\|\cdot\|_Y$, $h$ is such that
the initial value problem
\begin{eqnarray}
\dot{v}(s)&=&h(t+s, v(s)), \nonumber \\
v(0)&=&v_0, \label{ec_h}
\end{eqnarray}
is well-posed not only on $X$ but also on $Y$, with $\partial: X \to Y$ a linear operator for which it happens that, when $v_0 \in X$, $\partial v(s)$ is the solution of (\ref{ec_h}) with initial condition $\partial v_0$. This is typical of Dirichlet boundary conditions when $h$ is a smooth function of its arguments which does not contain derivatives.

According to Remark 3 in \cite{ACR}, the well-posedness of (\ref{abstract}) is guaranteed under the following hypotheses:

\begin{enumerate}
\item[(A1)] The boundary operator $\partial: D(A)\subset X \to Y$ is onto.

\item[(A2)] $\mbox{Ker}(\partial)$ is dense in $X$ and $A_0: D(A_0)= \mbox{ker}(\partial)\subset X \to X$, the restriction of $A$ to $\mbox{ker}(\partial)$, is the infinitesimal generator of a $C_0$-semigroup $\{ e^{t A_0} \}_{t \ge 0}$ in $X$, which type $\omega$ is assumed to be negative. (In case it were not negative, the problem can be reduced to one of negative type using the procedure well described in Section 2 of \cite{ACP}.)
\item[(A3)] If $z\in \mathbb{C}$ satisfies $\mbox{Re}(z)>\omega$ and $v\in Y$, then the steady state problem
\begin{eqnarray}
A x&=& z x, \nonumber \\
\partial x&=& v, \nonumber
\end{eqnarray}
possesses a unique solution denoted by $x=K(z)v$. Moreover, the linear operator $K(z):Y \to D(A)$ satisfies
$$
\|K(z) v\|_X \le C \|v\|_Y,$$
where the constant holds for any $z$ such that $\mbox{Re}(z) \ge \omega_0 > \omega$.
\item[(A4)] The nonlinear source $h$ belongs to $C^1([0,T]\times X, X)$.
\end{enumerate}

The main difference with the strategy on the NLS equation is that (\ref{ec_h}) is not exactly solvable any more. Because of that, we must suggest a numerical method to integrate it. We will assume no order reduction turns up in that part of the integration, as is the case when integrating non-stiff ODE systems. More precisely, we will consider a method of local order $3$ to integrate that part and will denote it by $\Psi_{\tau}^{h,t}(v_0)$ when advancing a stepsize $\tau$ from (\ref{ec_h}). We will also assume that this integrator imitates the invariance with respect to $\partial$, i.e.,
\begin{eqnarray}
\partial \Psi_{\tau}^{h,t}(v_0)=\Psi_{\tau}^{h,t}(\partial v_0). \label{frontera_inv}
\end{eqnarray}
Then, each step of the modified Strang method starting from $u_n$ consists of
\begin{enumerate}
\item[1.] Integrating  (\ref{ec_h}) with $v_0=u_n$ and $t=t_n$ so that $\Psi_{\frac{\tau}{2}}^{h,t_n}(u_n)$ is computed.
\item[2.] Integrating till time $\tau$
\begin{eqnarray}
\dot{w}^n(s)&=& A w^n(s), \nonumber \\
w^n(0)&=&\Psi_{\frac{\tau}{2}}^{h,t}(u_n), \nonumber \\
\partial w^n (s)&=& g^n(s), \nonumber
\end{eqnarray}
where
$$
g^n(0)=\Psi_{\frac{\tau}{2}}^{h,t_n}(g(t_n)), \quad g^n(\tau)=\Psi_{-\frac{\tau}{2}}^{h,t_{n+1}}(g(t_{n+1})),
$$
using the rational-like method which is based on the midpoint rule, i.e, similarly to (\ref{wn1}), the following formula is implemented
\begin{eqnarray}
w_{n+1}&=&-\Psi_{\frac{\tau}{2}}^{h,t_n}(u_n)+K(0)[g^n(0)+g^n(\tau)] \nonumber \\
&&+(I-\frac{\tau}{2}A_0)^{-1}\bigg[2 \Psi_{\frac{\tau}{2}}^{h,t_n}(u_n)-K(0)[g^n(0)+g^n(\tau)]\bigg]. \nonumber
\end{eqnarray}
\item[3.] Integrating  (\ref{ec_h}) with $v_0=w_{n+1}$ and $t=t_n+\frac{\tau}{2}$ so that $\Psi_{\frac{\tau}{2}}^{h,t_n+\frac{\tau}{2}}(w_{n+1})$ is computed.
\end{enumerate}

Then, we have the following results corresponding to both the local and global error when using this procedure.

\begin{theorem} Let us assume that the solution of (\ref{abstract}) belongs to
$$C^3([0,T],X) \cap C([0,T], D(A^3)),$$
$h\in  C^2([0,T],X)$, $g\in  C^2([0,T],Y)$ and that $\Psi_{\tau}^{h,t}(u(t))\in D(A^3)$ for every $\tau, t>0$. Then, for small enough $\tau$,
$\|\rho_n\|_X=O(\tau^3)$ where the constant in Landau notation is independent of $\tau$ and $t_n$.

\label{thloc_ab}
\end{theorem}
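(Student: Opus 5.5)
I will adapt the proof of Theorem \ref{thloc} to the abstract setting, replacing the exact nonlinear flows by $\Psi$. Here $\rho_n=\hat u_{n+1}-u(t_{n+1})$, where $\hat u_{n+1}$ is obtained from steps 1--3 starting at $u_n=u(t_n)$. Let $\phi^{h,t}_s$ denote the exact flow of (\ref{ec_h}). Define
\begin{eqnarray}
H_n=\Psi_{\frac{\tau}{2}}^{h,t_n}(u(t_n)), \quad H_{n+1}=\Psi_{-\frac{\tau}{2}}^{h,t_{n+1}}(u(t_{n+1})). \nonumber
\end{eqnarray}
By (\ref{frontera_inv}) and $\partial u(t)=g(t)$, we have $\partial H_n=g^n(0)$ and $\partial H_{n+1}=g^n(\tau)$, so these play exactly the role of (\ref{HH}). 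The proof then has three stages.

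\emph{First stage: the linear part with an ideal boundary.} As before, let $w^{n,b}$ be the quadratic interpolant with values $H_n$, $X_n$, $H_{n+1}$ at $s=0,\tau/2,\tau$. Choose $X_n=\frac14[\tau AH_n+H_{n+1}+3H_n]$ (the analogue without the factor $i$), so that the constant forcing term in the equation for $\hat w^n-w^{n,b}$ vanishes. Here $\hat w^n$ solves $\dot w=Aw$ with $w(0)=H_n$ and $\partial w=\partial w^{n,b}$. The key expansion is $H_{n+1}-H_n=\tau A u(t_n)+O(\tau^2)$, with $O(\tau^2)$ bounded in $D(A^2)$-type norms. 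It follows from:
\begin{itemize}
\item local order $3$ of $\Psi$, which gives $\Psi_{\pm\tau/2}=\phi_{\pm\tau/2}+O(\tau^3)$;
\item Taylor expansion of $u(t_{n+1})$ together with $\dot u=Au+h$, so that the $h$ contributions cancel at first order.
\end{itemize}
The forcing then takes the form $s\,\tau\,\beta_n+s^2\gamma_n$ with $\beta_n,\gamma_n$ bounded using $u\in C([0,T],D(A^3))$ and $\Psi(u(t))\in D(A^3)$. The variation-of-constants formula with the $\varphi_2,\varphi_3$ functions of $A_0$ gives $\|\hat w^n(\tau)-H_{n+1}\|_X=O(\tau^3)$. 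Boundedness of $\varphi_j(\tau A_0)$ follows from (A2), since the semigroup has negative type. I need to track that the $s^2$ coefficient is really $O(\tau)$ relative to $\tau^{-2}$; this uses the third-order Taylor terms of $u$ and of $\Psi$, as in the paper's computation of $D$.

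\emph{Second stage: the rational midpoint rule.} Apply the error bound (\ref{erlocmp}) of \cite{ACP}, valid in the abstract framework under (A1)--(A3), to $\hat w^n$. The derivatives $\hat w^{n(j)}(0)=A^jH_n$ for $j\le3$ are bounded since $H_n\in D(A^3)$. The derivatives of $g^n=\partial w^{n,b}$ vanish for $j\ge3$, and for $j\le2$ they are bounded through $g\in C^2([0,T],Y)$. This yields $\|\hat w_{n+1}-H_{n+1}\|_X=O(\tau^3)$.

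\emph{Third stage: the final nonlinear substep.} We have $\hat u_{n+1}=\Psi_{\frac\tau2}^{h,t_n+\frac\tau2}(\hat w_{n+1})$, and write
\begin{eqnarray}
\hat u_{n+1}-u(t_{n+1})=\big[\Psi_{\frac\tau2}(\hat w_{n+1})-\Psi_{\frac\tau2}(H_{n+1})\big]+\big[\Psi_{\frac\tau2}(H_{n+1})-u(t_{n+1})\big]. \nonumber
\end{eqnarray}
The first bracket is bounded by $(1+C\tau)\|\hat w_{n+1}-H_{n+1}\|_X$, using the Lipschitz continuity of $\Psi$ that follows from (A4). The second bracket is $O(\tau^3)$, because it equals $\Psi_{\frac\tau2}\circ\Psi_{-\frac\tau2}(u(t_{n+1}))-u(t_{n+1})$. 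By local order $3$ of $\Psi$, this composition agrees with $\phi_{\frac\tau2}\circ\phi_{-\frac\tau2}=I$ up to $O(\tau^3)$.

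\emph{Main obstacle.} I expect the delicate point to be the first stage. There I must show that the inexact substeps $\Psi$ perturb $H_{n+1}-H_n$ only at order $\tau^3$ in the graph norms of $A$ that are actually applied. The constants must stay uniform in $t_n$, and this is where the hypothesis $\Psi_\tau^{h,t}(u(t))\in D(A^3)$ is essential.
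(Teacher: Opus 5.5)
This is essentially the paper's own proof. It uses the same $H_n,H_{n+1}$, the same quadratic interpolant with $X_n=\frac14[\tau AH_n+H_{n+1}+3H_n]$, the $\varphi_2,\varphi_3$ variation-of-constants bound, the rational midpoint error estimate (using $H_n\in D(A^3)$ and that $g^n$ is quadratic), and the same final splitting of $\rho_n$ via reversibility of the exact flow.

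One bookkeeping correction concerns your first stage. The first-order expansion $H_{n+1}-H_n=\tau Au+O(\tau^2)$ only shows that the $s^2$ coefficient is bounded; it equals $\frac12A^3u+O(\tau)$, which suffices because of the extra factor $s$. It is the coefficient of $s$ that must be $O(\tau)$, and for that you need, as in the paper, $H_n=u+\frac{\tau}{2}h+O(\tau^2)$ and $H_{n+1}-H_n=\tau Au+\frac{\tau^2}{2}[\ddot u-h_t-h_u\dot u]+O(\tau^3)$, together with $\ddot u=A\dot u+h_t+h_u\dot u$. These give $A^2H_n+\frac{2}{\tau}AH_n-\frac{2}{\tau^2}(H_{n+1}-H_n)=O(\tau)$.
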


\begin{proof}
The main difference with the proof of Theorem \ref{thloc} is that now $i A$ and $i A_0$ must be substituted by $A$ and $A_0$ and that
$$H_n= \Psi_{\frac{\tau}{2}}^{h,t_n}(u(t_n)), \quad H_{n+1}= \Psi_{-\frac{\tau}{2}}^{h,t_{n+1}}(u(t_{n+1})).$$
Using then the assumption of local order $3$ for $\Psi_{\tau}^{h,t}$, it follows by developing into Taylor series that
\begin{eqnarray}
H_n&=&u(t_n)+\frac{\tau}{2} h(t_n,u(t_n))+\frac{\tau^2}{8}[h_t(t_n,u(t_n))+h_u(t_n,u(t_n))h(t_n,u(t_n))]+O(\tau^3), \nonumber \\
H_{n+1}-H_n&=&\tau A u(t_n)+\frac{\tau^2}{2}[\ddot{u}(t_n)-h_t(t_n,u(t_n))-h_u(t_n,u(t_n))\dot{u}(t_n)]+O(\tau^3). \nonumber
\end{eqnarray}
From here, proceeding as in the proof of Theorem \ref{thloc}, it follows that
\begin{eqnarray}
\stackrel{\cdot}{\overbrace{(\hat{w}^n-w^{n,b})}}(s)&=& A (\hat{w}^n(s)-w^{n,b}(s))+s O(\tau)+\frac{s^2}{2} (A^3 u(t_n) +O(\tau)), \nonumber \\
(\hat{w}^n-w^{n,b})(0)&=&0, \nonumber \\
\partial (\hat{w}^n-w^{n,b})(s)&=&0, \nonumber
\end{eqnarray}
where the invariance property (\ref{frontera_inv}) has been used. As a consequence,
$$
\hat{w}^n(s)-w^{n,b}(s)=s^2 \varphi_2( s A_0) O(\tau)+ s^3 \varphi_3( s A_0) [\frac{1}{2} A^3 u(t_n)+O(\tau)],
$$
which implies, considering $s=\tau$, that $\|\hat{w}^n(\tau)-H_{n+1}\|_X=O(\tau^3)$. Also with the same argument as in the proof of Theorem \ref{thloc} but using the new hypothesis that $$H_n=\Psi_{\frac{\tau}{2}}^{h,t_n}(u(t_n))\in D(A^3),$$
 it also happens that $\|\hat{w}_{n+1}-\hat{w}^n(\tau)\|_X=O(\tau^3)$. Then, using the triangle inequality, $\|\hat{w}_{n+1}-H_{n+1}\|_X =O(\tau^3)$.

From this, using that
\begin{eqnarray}
\rho_n&=&\hat{u}_{n+1}-u(t_{n+1})= \Psi_{\frac{\tau}{2}}^{h,t_n+\frac{\tau}{2}}(\hat{w}_{n+1})-u(t_{n+1}) \nonumber \\
&=& \big[\Psi_{\frac{\tau}{2}}^{h,t_n+\frac{\tau}{2}}(\hat{w}_{n+1})-\Psi_{\frac{\tau}{2}}^{h,t_n+\frac{\tau}{2}}(H_{n+1})\big]+
\big[ \Psi_{\frac{\tau}{2}}^{h,t_n+\frac{\tau}{2}}(\Psi_{-\frac{\tau}{2}}^{h,t_{n+1}}(u(t_{n+1}))-u(t_{n+1}) \big], \nonumber
\end{eqnarray}
that $\Psi_{\tau}^{h,t}$ is of third order, that $h\in C^2([0,T]\times X,X)$ and the reversibility of the true flow of (\ref{ec_h}), the result follows.

\end{proof}

\begin{theorem} Under the assumptions of Theorem \ref{thloc_ab} and assuming also that there exists a constant $C$ such that
\begin{eqnarray}
\|r(\tau A_0)^n \|_{X} \le C \quad \mbox{ whenever }n \tau \le T,
\label{stab}
\end{eqnarray}
it happens that the global error $e_n=u(t_n)-u_n$ satisfies
$$\|e_n\|_X=O(\tau^2),$$
where the constant in Landau notation is independent of $\tau$ and is bounded for $0\le n \tau \le T$.
\end{theorem}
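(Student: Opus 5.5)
We follow the proof of Theorem \ref{thglob2}, replacing the exact nonlinear flow by $\Psi$ and the unitarity of $r(i\tau A_0)$ by the power bound (\ref{stab}). As there, $\hat{u}_{n+1}$ denotes one step started from $u(t_n)$, and $\rho_n$ is its local error, which is $O(\tau^3)$ by Theorem \ref{thloc_ab}. We write
$$e_{n+1}=-\rho_n+(\hat u_{n+1}-u_{n+1}),\qquad \hat u_{n+1}-u_{n+1}=\Psi_{\frac{\tau}{2}}^{h,t_n+\frac{\tau}{2}}(\hat w_{n+1})-\Psi_{\frac{\tau}{2}}^{h,t_n+\frac{\tau}{2}}(w_{n+1}).$$
The first ingredient is a Lipschitz-type expansion of the nonlinear integrator. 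Since $\Psi$ is a consistent one-step method for (\ref{ec_h}) with $h\in C^1$ (by (A4)), we expect
$$\Psi_{\frac{\tau}{2}}^{h,t}(v_1)-\Psi_{\frac{\tau}{2}}^{h,t}(v_2)=(v_1-v_2)+\tau E(v_1,v_2),\qquad \|E(v_1,v_2)\|_X\le C\|v_1-v_2\|_X,$$
for $v_1,v_2$ in a bounded set. We take this as a mild standing assumption on $\Psi$, just as $e^{i\frac{\tau}{2}f}$ was handled in Theorem \ref{thglob2}. Boundedness of the numerical iterates would be secured a posteriori by the usual bootstrap argument.

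Next, the midpoint stage. Both $\hat w_{n+1}$ and $w_{n+1}$ use the same boundary data $g^n(0)$ and $g^n(\tau)$, which depend only on $g$ and not on $u_n$. Hence the $K(0)$ terms cancel in the difference, and
$$\hat w_{n+1}-w_{n+1}=r(\tau A_0)\bigl[\Psi_{\frac{\tau}{2}}^{h,t_n}(u(t_n))-\Psi_{\frac{\tau}{2}}^{h,t_n}(u_n)\bigr]=r(\tau A_0)\bigl[e_n+\tau E(u(t_n),u_n)\bigr].$$
Combining the two identities gives the recursion
$$e_{n+1}=r(\tau A_0)e_n+\tau\,\bar E_n-\rho_n,\qquad \|\bar E_n\|_X\le C\|e_n\|_X,$$
where we use $\|r(\tau A_0)\|_X\le C$ (the case $n=1$ of (\ref{stab})). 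This formulation is important: the only operator acting on $e_n$ at leading order is exactly $r(\tau A_0)$.

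Finally, unrolling the recursion with $e_0=0$ gives
$$e_n=-\sum_{l=0}^{n-1}r(\tau A_0)^{n-1-l}\rho_l+\tau\sum_{l=0}^{n-1}r(\tau A_0)^{n-1-l}\bar E_l.$$
We then apply (\ref{stab}) to every power of $r$, which yields
$$\|e_n\|_X\le C\,n\max_l\|\rho_l\|_X+C\tau\sum_{l<n}\|e_l\|_X.$$
The first term is $O(\tau^2)$ because $n\tau\le T$ and $\rho_l=O(\tau^3)$, and the discrete Gronwall inequality then gives $\|e_n\|_X=O(\tau^2)$.

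The main obstacle is the step in the midpoint stage. In Theorem \ref{thglob2} the multiplicative phase factors were harmless because every factor had norm one. Here we only have a bound on the powers $r(\tau A_0)^n$, not on products of $r$ interleaved with other operators. The nonlinear parts must therefore be written as identity plus $O(\tau)$ additive perturbations, as above, rather than as multiplicative factors. Carried out this way, only pure powers of $r(\tau A_0)$ appear in the unrolled sum, and (\ref{stab}) applies directly.
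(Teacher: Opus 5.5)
Your argument is essentially the paper's: the boundary terms cancel so that $\hat w_{n+1}-w_{n+1}=r(\tau A_0)[\Psi_{\frac{\tau}{2}}^{h,t_n}(u(t_n))-\Psi_{\frac{\tau}{2}}^{h,t_n}(u_n)]$, the nonlinear half-steps are written as identity plus an $O(\tau)$ Lipschitz perturbation so that only pure powers of $r(\tau A_0)$ appear after unrolling, and (\ref{stab}) together with the discrete Gronwall inequality finishes the proof.

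The one difference is that you postulate $\Psi_{\frac{\tau}{2}}^{h,t}(v_1)-\Psi_{\frac{\tau}{2}}^{h,t}(v_2)=(v_1-v_2)+\tau E(v_1,v_2)$ as an extra property of $\Psi$, which is not among the theorem's hypotheses. The paper instead proves this identity for the exact flow of (\ref{ec_h}), using $h\in C^1$, and transfers it to $\Psi$ through its third local order. The price is an additive $O(\tau^3)$ term per step, which sums to $O(\tau^2)$, so your standing assumption on $\Psi$ is not actually needed.
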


\begin{proof}
We consider the decomposition
\begin{eqnarray}
e_n&=&(u(t_{n+1})-\hat{u}_{n+1})+(\hat{u}_{n+1}-u_{n+1}) \nonumber \\
&=& \rho_{n+1}+\Psi_{\frac{\tau}{2}}^{h,t_n+\frac{\tau}{2}}(\hat{w}_{n+1})-\Psi_{\frac{\tau}{2}}^{h,t_n+\frac{\tau}{2}}(w_{n+1}) \nonumber \\
&=& \rho_{n+1}+\hat{w}_{n+1}-w_{n+1}+\frac{\tau}{2}E(\hat{w}_{n+1},w_{n+1})+O(\tau^3), \label{decomp}
\end{eqnarray}
where, for the last equality, we have used that $\Psi_{\tau}^{h,t}$ integrates with third local order and that, for the exact flow $\phi_{\tau}^{h,t}$, it happens that
$$
\phi_{\frac{\tau}{2}}^{h,t_n+\frac{\tau}{2}}(\hat{w}_{n+1})-\phi_{\frac{\tau}{2}}^{h,t_n+\frac{\tau}{2}}(w_{n+1})=\hat{w}_{n+1}-w_{n+1}+\frac{\tau}{2}E(\hat{w}_{n+1},w_{n+1}),$$
with
\begin{eqnarray}
\|E(\hat{w}_{n+1},w_{n+1})\|_X \le C \|\hat{w}_{n+1}-w_{n+1}\|_X,
\end{eqnarray}
for some constant $C$, using that $h\in C^1([0,T]\times X, X)$.

Then, by the definition of $\hat{w}_{n+1}$, $w_{n+1}$ and (\ref{wn1}),
\begin{eqnarray}
\hat{w}_{n+1}-w_{n+1}&=& r(\tau A_0)\big[ \Psi_{\frac{\tau}{2}}^{h,t_n+\frac{\tau}{2}}(\hat{w}_{n+1})-\Psi_{\frac{\tau}{2}}^{h,t_n+\frac{\tau}{2}}(w_{n+1}) \big] \nonumber \\
&=&r(\tau A_0)\big[e_n+\frac{\tau}{2}E(u(t_n),u_n)+O(\tau^2)\big], \label{whw}
\end{eqnarray}
from what there exists another constant $C$ such that
\begin{eqnarray}
\| \hat{w}_{n+1}-w_{n+1} \|_X \le C \|e_n\|_X.
\label{cotaw}
\end{eqnarray}
Inserting (\ref{whw}) in (\ref{decomp}), it follows that
\begin{eqnarray}
e_{n+1}=\rho_{n+1}+r(\tau A_0)[e_n+\frac{\tau}{2}E(u(t_n),u_n)+O(\tau^3)]+\frac{\tau}{2}E(\hat{w}_{n+1},w_{n+1})+O(\tau^3).
\nonumber
\end{eqnarray}
Therefore, it is inductively proved that
\begin{eqnarray}
e_n&=&r(\tau A_0)^n e_0+\sum_{l=1}^n r(\tau A_0)^{n-l} [\rho_l+\frac{\tau}{2}E(\hat{w}_l,w_l)+O(\tau^3)] \nonumber \\
&&+\frac{\tau}{2} \sum_{l=0}^{n-1} r(\tau A_0)^{n-l} [ E(u(t_l),u_l)+O(\tau^2)]. \nonumber
\end{eqnarray}
This implies, using that $e_0=0$, (\ref{stab}), (\ref{whw}) and (\ref{cotaw}) that, for $t_n \le T$,
$$
\|e_n\|_X \le C n \max_{l=0,\dots,n} \|\rho_l\|_X + \frac{\tau}{2} C \sum_{l=1}^n \|e_{l-1}\|_X+\frac{\tau}{2} C \sum_{l=0}^{n-1} \|e_l\|_X+O(\tau^2),$$
which implies the result using the bound for the local error in Theorem \ref{thloc_ab} and discrete Gronwall inequality.
\end{proof}

\begin{remark} Hypothesis (\ref{stab}) is well justified for sectorial operators in \cite{P}.
\end{remark}
\end{document}